\documentclass{article}
\usepackage[centertags]{amsmath}
\usepackage{amsfonts}
\usepackage{amssymb}
\usepackage{amsthm}
\usepackage[all]{xy}
\usepackage{url}
\newtheorem{thm}{Theorem}[section]
\newtheorem{prop}[thm]{Proposition}
\theoremstyle{definition}
\newtheorem{defn}{Definition}[section]
\theoremstyle{definition}

\theoremstyle{definition}

\numberwithin{equation}{section}

\newcommand{\mirr}{\leq^{irr}}
\newcommand{\sirr}{{\scriptscriptstyle irr}}
\newcommand{\free}{\boxplus}
\newcommand{\taup}{\stackrel{\centerdot}{\tau}}
\newcommand{\mup}{\stackrel{\centerdot}{\mu}}
\newcommand{\pip}{\stackrel{\centerdot}{\pi}}
\newcommand{\Max}{\mathrm{Max}}
\newcommand{\Min}{\mathrm{Min}}
\newcommand{\Des}{\mathrm{Des}}
\newcommand{\Exc}{\mathrm{Exc}}
\bibliographystyle{amsplain}
\begin{document}
\author{P. Petrullo and D. Senato}
\date{}
\title{A new explicit formula for Kerov polynomials}
\maketitle
\thispagestyle{empty}
\begin{center}
\textsf{Dipartimento di Matematica e Informatica, Universit\`a
degli Studi della Basilicata, via dell'Ateneo Lucano
10, 85100 Potenza, Italia}.\\
\goodbreak
\small\verb"p.petrullo@gmail.com, domenico.senato@unibas.it"
\end{center}
\begin{abstract}
We prove a formula expressing the Kerov polynomial $\Sigma_k$ as a weighted sum over the lattice of noncrossing partitions of the set $\{1,\ldots,k+1\}$.  In particular, such a formula is related to a partial order $\mirr$ on the Lehner's irreducible noncrossing partitions which can be described in terms of left-to-right minima and maxima, descents and excedances of permutations. This provides a translation of the formula in terms of the Cayley graph of the symmetric group $\frak{S}_k$ and allows us to recover the coefficients of $\Sigma_k$ by means of the posets $P_k$ and $Q_k$ of pattern-avoiding permutations discovered by B\'ona and Simion. We also obtain symmetric functions specializing in the coefficients of $\Sigma_k$.
\end{abstract}
\textsf{\textbf{keywords}:} symmetric group, symmetric functions, Cayley graph, Kerov polynomials, noncrossing partitions.\\\\
\textsf{\textsf{AMS subject classification}:05E10, 06A11, 05E05}\\
\section{Introduction}
 The $n$-th free cumulant $R_n$ can be thought as a function $R_n:\lambda\in\mathcal{Y}\rightarrow R_n(\lambda)\in\mathbb{Z}$, defined on the set of all Young diagrams $\cal{Y}$, which we identify with the corresponding integer partition, and taking integer values~\cite{B03}. Indeed, after a suitable representation of a Young diagram $\lambda$ as a function in the plane $\mathbb{R}^2$, it is possible to determine the sequences of integers $x_0,\dots,x_m$ and $y_1,\ldots,y_m$, consisting of the $x$-coordinates of the minima and maxima of $\lambda$, respectively. In this way, if we set
\[\mathcal{H}_{\lambda}(z)=\frac{\prod_{i=0}^m(z-x_i)}{\prod_{i=1}^m(z-y_i)},\]
then $R_n(\lambda)$ is the coefficient of $z^{n-1}$ in the formal Laurent series expansion of $\mathcal{K}_{\lambda}(z)$ such that
\[\mathcal{K}_{\lambda}\bigl( \mathcal{H}_{\lambda}(z)\bigr)=\mathcal{H}_{\lambda}\bigl( \mathcal{K}_{\lambda}(z)\bigr)=z.\]
It can be shown that $R_1(\lambda)=0$ for all $\lambda$. So, the $k$-th Kerov polynomial is a polynomial $\Sigma_k(R_2,\ldots,R_{k+1})$ which satisfies the following identity,
\[\Sigma_k(R_2(\lambda),\ldots,R_{k+1}(\lambda))=(n)_k\frac{\chi^{\lambda}(k,1^{n-k})}{\chi^{\lambda}(1^{n})},\]
where $\chi^{\lambda}(k,1^{n-k})$ denotes the value of the irreducible character of the symmetric group $\frak{S}_n$ indexed by the partition $\lambda$ on $k$-cycles. Two remarkably properties of $\Sigma_k$ have to be stressed. First, it is an \lq\lq universal polynomial\rq\rq, that is it does not depend on $\lambda$ nor on $n$. Second, its coefficients are nonnegative integers. A combinatorial proof of the positivity of $\Sigma_k$ is quite recent and is due to F\'eray~\cite{F08}. Such a proof was then simplified by Do\'lega, F\'eray and \'Sniady~\cite{DFS08}. Until now, several results on Kerov polynomials have been proved and conjectured, see for instance \cite{B07,GR07,La08,St02} and \cite{F09} for a more detailed treatment.\\

Originally, free cumulants arise in the noncommutative context of free probability theory~\cite{NS06}, and their applications in the asymptotic character theory of the symmetric group is due mainly to Biane. In 1992, Speicher~\cite{Sp94} showed that the formulae connecting moments and free cumulants of a noncommutative random variable $X$ obey the M\"obius inversion on the lattice of noncrossing partitions of a finite set. This result highlights the strong analogy between free cumulants and classical cumulants, which are related to the moments of a random variable variable $X$, defined on a classical probability space, via the M\"obius inversion on the lattice of all partitions of a finite set. More recently, Di Nardo, Petrullo and Senato~\cite{DNPS09} have shown how the classical umbral calculus provides an alternative setting for the cumulant families which passes through a generalization of the Abel polynomials.\\

In 1997, it was again Biane~\cite{B97} to show that the lattice $NC_n$ of noncrossing partitions of $\{1,\ldots,n\}$ can be embedded into the Cayley graph of the symmetric group $\frak{S}_n$. So that, it seems reasonable that a not too complicated expression of the Kerov polynomials involving noncrossing partitions, or the Cayley graph of $\frak{S}_n$, would exist. In particular, such a formula, conjectured in \cite{B03}, appeared with a rather implicit description into the papers \cite{DFS08,F08}.\\

In this paper, we state an explicit formula expressing $\Sigma_k$ as a weighted sum over the lattice $NC_{k+1}$. In particular, we introduce a partial order $\mirr$ on the subset $NC_n^{\sirr}$ of $NC_n$ consisting of the noncrossing partitions having $1$ and $k+1$ in the same block. Then, we prove that
\[\Sigma_k=\sum_{\tau\in NC_{k+1}^{\sirr}}\left[\sum_{\pi:\tau\mirr\pi}(-1)^{\ell(\pi)-1}W_{\tau}(\pi)\right]R_{\taup},\]
where $\ell(\pi)$ is the number of blocks of $\pi$, $W_{\tau}(\pi)$ is a suitable weight depending on $\tau$ and $\pi$, and $R_{\taup}=\prod_{B}R_{|B|}$, $B$ ranging over the blocks of $\tau$ having at least $2$ elements.

Since each $\pi\in NC_{k+1}^{\sirr}$ is obtained from a given $\pi'\in NC_k$ simply by inserting $k+1$ in the block containing $1$, then the Biane embedding can be used to translate the formula in terms of the Cayley graph of the symmetric group $\frak{S}_k$.

We also define two slight different versions of the Foata bijection which give rise to a description of $\mirr$ in terms of left-to-right minima and maxima of permutations. Moreover, the maps $\theta$ and $f$,
studied by B\'ona and Simion~\cite{BS}, allow us to compute $\Sigma_k$ via the posets $P_k$ and $Q_k$ of pattern-avoiding permutations ordered by inclusion of descents sets and excedances sets respectively.

Finally, the special structure of the weight $W_{\tau}(\pi)$ makes we able to determine symmetric functions $\mathbf{g}_{\mu}(x_0,\ldots,x_{k-1})$ that specialized in $x_i=i$ return the coefficient of $\prod_{i\geq 2}R_i^{m_i}$ in $\Sigma_k$, for every integer partition $\mu$ of size $k+1$ having $m_i$ parts equal to $i$.\\

\section{Kerov polynomials}
Let $n$ be a positive integer and let
$\lambda=(\lambda_1,\ldots,\lambda_l)$ be an integer partition of
size $n$, that is $1\leq\lambda_1\leq \cdots\leq\lambda_l$ and
$\sum\lambda_i=n$. As is well known, the Young diagram of
$\lambda$ (in the French convention) is an array of $n$
left-aligned boxes, whose $i$-th row consists of $\lambda_i$
boxes. Denote by $\mathcal{Y}_n$ the set of all Young diagram of
size $n$, and set $\mathcal{Y}=\bigcup\mathcal{Y}_n$. From now on,
an integer partition and its Young diagram will be denoted  by the
same symbol $\lambda$.

After a suitable representation of a Young diagram $\lambda$ as a function in the plane $\mathbb{R}^2$~\cite{B03}, it is possible to determine the sequences of integers $x_0,\ldots,x_m$ and $y_1,\ldots,y_m$, consisting of the $x$-coordinates of its minima and maxima respectively. Then, by expanding the rational function
\[\mathcal{H}_{\lambda}(z)=\frac{\prod_{i=0}^m(z-x_i)}{\prod_{i=1}^m(z-y_i)}\]
as a formal power series in $z^{-1}$ one has
\[\mathcal{H}_{\lambda}(z)=z^{-1}+\sum_{n\geq 1}M_n(\lambda)\,z^{-(n+1)}.\]
The integer $M_n(\lambda)$ is said to be the $n$-th \textit{moment} of $\lambda$. Now, define $\mathcal{K}_{\lambda}(z)=\mathcal{H}_{\lambda}^{\scriptscriptstyle<-1>}(z)$, that is $\mathcal{K}_{\lambda}(\mathcal{H}_{\lambda}(z))=\mathcal{H}_{\lambda}(\mathcal{K}_{\lambda}(z))=z$, and consider its expansion as a formal Laurent series,
\[\mathcal{K}_{\lambda}(z)=z^{-1}+\sum_{n\geq 1}R_n(\lambda)\,z^{n-1}.\]
Then, the integer $R_n(\lambda)$ is named the $n$-th \textit{free cumulant} of $\lambda$. It is not difficult to see that $M_1(\lambda)=R_1(\lambda)=0$ for all $\lambda$.

By setting $\mathcal{M}_{\lambda}(z)=z^{-1}\mathcal{H}_{\lambda}(z^{-1})$ and $\mathcal{R}_{\lambda}(z)=z\mathcal{K}_{\lambda}(z)$, we obtain two formal power series in $z$,
\[\mathcal{M}_{\lambda}(z)=1+\sum_{n\geq 1}M_n(\lambda)\,z^{n} \text{ and } \mathcal{R}_{\lambda}(z)=1+\sum_{n\geq 1}R_n(\lambda)\,z^{n},\]
such that
\begin{equation}\label{id:RvsM}
\mathcal{M}_{\lambda}(z)=\mathcal{R}_{\lambda}\left( z\,\mathcal{M}_{\lambda}(z)\right).
\end{equation}

Let $\lambda$ and $\mu$ be two partitions of size $n$, and denote by $\chi^{\lambda}(\mu)$ the value of the irreducible character of $\frak{S}_n$ indexed by $\lambda$ on the permutations of type $\mu$. So that, if $\mu=(k,1^{n-k})$, that is $\mu_1=k$ and $\mu_2=\cdots=\mu_{n-k+1}=1$, then the value of the normalized character $\widehat{\chi}^{\lambda}$ on the $k$-cycles of $\frak{S}_n$ is given
by
\[\widehat{\chi}^{\lambda}(k,1^{n-k})=(n)_k\frac{\chi^{\lambda}(k,1^{n-k})}{\chi^{\lambda}(1^n)},\]

where $(n)_k=n(n-1)\cdots(n+k-1)$. The $k$-th Kerov polynomial is
a polynomial $\Sigma_k$, in $k$ commuting variables, which satisfies the following identity,
\[\Sigma_k(R_2(\lambda),\ldots,R_{k+1}(\lambda))=\widehat{\chi}^{\lambda}(k,1^{n-k}).\]
If we think of $R_n(\lambda)$ as the image of a map $R_n:\lambda\in\mathcal{Y}\rightarrow R_n(\lambda)\in\mathbb{Z}$, then also Kerov polynomials become maps $\Sigma_k=\Sigma_k(R_1,\ldots,R_{k+1})$, which are polynomials in the $R_n$'s, such that $\Sigma_k(\lambda)=\widehat{\chi}^{\lambda}(k,1^{n-k})$.

Since the coefficients of $\Sigma_k$ do not depend on $\lambda$ nor on $n$, but only on $k$, such polynomials are said to be \lq\lq universal\rq\rq. A second remarkably property of Kerov polynomials is that all their coefficients are positive integers. This fact is known as the \lq\lq Kerov conjecture\rq\rq~\cite{K00}. The first proof of the Kerov conjecture was given with combinatorial methods by F\'eray~\cite{F08}. The same author with Do\'lega and \'Sniady~\cite{DFS08} have then simplified the proof.  The following formula for $\Sigma_k$ is due to Stanley~\cite{St02}.
\begin{thm}
Let $\mathcal{R}(z)=1+\sum_{n\geq 2}R_nz^n$. If
\[\mathcal{F}(z)=\frac{z}{\mathcal{R}(z)} \text{ and } \mathcal{G}(z)=\frac{z}{\mathcal{F}^{\scriptscriptstyle <-1>}(z^{-1})},\]
then we have
\begin{equation}\label{id:Kerov1}
\Sigma_k=-\frac{1}{k}\,[z^{-1}]_{\infty}\prod_{j=0}^{k-1}\mathcal{G}(z-j).
\end{equation}
\end{thm}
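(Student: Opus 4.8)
The plan is to read \eqref{id:Kerov1} off the defining property of $\Sigma_k$ --- that $\Sigma_k\bigl(R_2(\lambda),\ldots,R_{k+1}(\lambda)\bigr)=\widehat{\chi}^{\lambda}(k,1^{n-k})$ for every $\lambda$ --- after inserting a residue expression for the normalized character on a $k$-cycle and rewriting it on the free-cumulant side. Fix $\lambda\vdash n$. The one genuinely representation-theoretic ingredient I would need is the classical formula, due to Kerov and made transparent by Biane through the interlacing data $x_0<y_1<\cdots<y_m<x_m$ of $\lambda$ and the embedding $NC_n\hookrightarrow\frak{S}_n$, that writes $\widehat{\chi}^{\lambda}(k,1^{n-k})$ as the coefficient of $z^{-1}$ in a product of $k$ copies, shifted by $0,1,\ldots,k-1$, of a rational function built from $\mathcal{H}_{\lambda}$. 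I would derive this by expressing $\chi^{\lambda}$ on a $k$-cycle via the Jucys--Murphy elements of $\mathbb{C}\frak{S}_n$ --- equivalently via the contents of the boxes of $\lambda$, which is exactly the information packaged by $\mathcal{H}_{\lambda}$ --- and then collapsing the resulting alternating sum with the residue theorem. This is the step I expect to be the main obstacle; everything afterwards is formal-series manipulation. (If one is willing to cite Biane's character formula \cite{B97,B03} directly, this step disappears and the proof collapses to the bookkeeping below.)

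Next I would connect the $\mathcal{H}_{\lambda}$ side to the series $\mathcal{F},\mathcal{G}$ of the statement, specialized at $\lambda$, so that $\mathcal{R}(z)=\mathcal{R}_{\lambda}(z)$ and $\mathcal{F}(z)=z/\mathcal{R}_{\lambda}(z)$. From \eqref{id:RvsM}, putting $w=z\,\mathcal{M}_{\lambda}(z)$ gives $z=w/\mathcal{R}_{\lambda}(w)=\mathcal{F}(w)$, hence $\mathcal{F}^{<-1>}(z)=z\,\mathcal{M}_{\lambda}(z)$; substituting $z^{-1}$ and using $\mathcal{M}_{\lambda}(z)=z^{-1}\mathcal{H}_{\lambda}(z^{-1})$ yields
\[
\mathcal{F}^{<-1>}(z^{-1})=z^{-1}\mathcal{M}_{\lambda}(z^{-1})=\mathcal{H}_{\lambda}(z).
\]
Thus $\mathcal{G}(z)$ is precisely the rational function in $z$ obtained by substituting $\mathcal{H}_{\lambda}$ here, so that $\prod_{j=0}^{k-1}\mathcal{G}(z-j)$ coincides with the $\mathcal{H}_{\lambda}$-product produced in the first step; comparing the two gives $-\tfrac1k[z^{-1}]_{\infty}\prod_{j=0}^{k-1}\mathcal{G}(z-j)=\widehat{\chi}^{\lambda}(k,1^{n-k})$.

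Finally, universality. Writing $\mathcal{G}(z)$ as a polynomial in $z$ plus a power series in $z^{-1}$, all of whose coefficients are polynomials in the $R_n(\lambda)$, only finitely many of these coefficients can enter $[z^{-1}]_{\infty}\prod_{j=0}^{k-1}\mathcal{G}(z-j)$ (the admissible tuples of $z$-exponents summing to $-1$ are finite in number), so the left-hand side is a polynomial in finitely many of the $R_n(\lambda)$ whose coefficients depend on neither $\lambda$ nor $n$; call it $P_k$. By the previous two steps $P_k$ has exactly the defining property of $\Sigma_k$, whence $P_k=\Sigma_k$ by the uniqueness built into that definition. That in fact only $R_2,\ldots,R_{k+1}$ occur then follows from the known weight structure of Kerov polynomials, or directly from a finer count of how the weight $n$ attached to $R_n$ propagates through $\mathcal{F}$, its compositional inverse, the integer shifts $z\mapsto z-j$, and the final extraction of $[z^{-1}]$. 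This establishes \eqref{id:Kerov1}.
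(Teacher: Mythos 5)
A preliminary remark: the paper offers no proof of this theorem; it is stated as a known result of Stanley \cite{St02} (see also \cite{B03}), so there is nothing internal to compare your argument against. Judged on its own, your outline follows the standard Biane--Stanley route, and the formal-series half of it is correct: from \eqref{id:RvsM}, setting $w=z\,\mathcal{M}_{\lambda}(z)$ gives $\mathcal{F}(w)=z$, hence $\mathcal{F}^{\scriptscriptstyle <-1>}(z)=z\,\mathcal{M}_{\lambda}(z)$ and $\mathcal{F}^{\scriptscriptstyle <-1>}(z^{-1})=z^{-1}\mathcal{M}_{\lambda}(z^{-1})=\mathcal{H}_{\lambda}(z)$, so that $\mathcal{G}(z)=z/\mathcal{H}_{\lambda}(z)$ upon specializing $R_n\mapsto R_n(\lambda)$. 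The closing universality argument (only finitely many coefficients of the expansion of $\mathcal{G}$ at infinity can contribute to $[z^{-1}]_{\infty}$ of the product, these are universal polynomials in the $R_n$'s, and uniqueness of $\Sigma_k$ does the rest) is also acceptable as stated.

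The genuine gap is the step you yourself flag as the ``main obstacle'': the identity
\[
\widehat{\chi}^{\lambda}(k,1^{n-k})=-\frac{1}{k}\,[z^{-1}]_{\infty}\prod_{j=0}^{k-1}\frac{z-j}{\mathcal{H}_{\lambda}(z-j)}
\]
\emph{is} the theorem, and ``express $\chi^{\lambda}$ on a $k$-cycle via Jucys--Murphy elements and collapse the alternating sum with the residue theorem'' is a program, not a proof. Concretely, what is needed is the Frobenius-type character formula writing $\widehat{\chi}^{\lambda}(k,1^{n-k})$ as $-\frac{1}{k}[z^{-1}]_{\infty}$ of $z(z-1)\cdots(z-k+1)$ times a telescoping ratio $\Phi_{\lambda}(z-k)/\Phi_{\lambda}(z)$ built from the content data of $\lambda$, together with the identification of that ratio with $\prod_{j=0}^{k-1}\mathcal{H}_{\lambda}(z-j)^{-1}$ via the interlacing sequences $x_0,\ldots,x_m$ and $y_1,\ldots,y_m$; none of this, nor the bookkeeping of the normalization $(n)_k$ against the class size $\frac{1}{k}(n)_k$, nor the justification of the expansion at infinity, is actually carried out in your sketch. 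Your escape hatch --- citing \cite{B97,B03} for this formula --- is legitimate and is in effect what the paper itself does by attributing the whole statement to Stanley; but if the point of the exercise is to prove the theorem rather than to cite it, this is where the entire mathematical content lives, and the proposal as written does not supply it.
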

More precisely, if $[z^n]f(z)$ denotes the coefficient of $z^n$ in the formal power series $f(z)$, then $[z^{-1}]_{\infty}f(z)=[z]f(z^{-1})$. This way, identity \eqref{id:Kerov1} states that $\Sigma_k$ is obtained by
expressing the right-hand side in terms of the free cumulants $R_n$'s.

Moreover, if $\mathcal{M}(z)=1+\sum_{n\geq 1}M_nz^n$, then by virtue of \eqref{id:RvsM} we have $z\,{\mathcal{G}(z)}^{-1}=\mathcal{M}(z^{-1}),$ and \eqref{id:Kerov1} can be rewritten in the following equivalent form,
\begin{equation}\label{id:Kerov2}
\Sigma_k=-\frac{1}{k}\,[z^{k+1}]\prod_{j=0}^{k-1}\frac{1-jz}{\mathcal{M}(\frac{z}{1-jz})}.
\end{equation}
\section{Irreducible noncrossing partitions}
A partition of a finite set $S$ is an unordered sequence $\pi=\{A_1,\ldots,A_l\}$ of its nonempty subsets, such that $A_i\cap A_j=\varnothing$, if $i\neq j$, and $\cup A_i=S$. We say that a partition $\tau$ refines a partition $\pi$, in symbols $\tau\leq \pi$, if and only if each block of $\pi$ is union of blocks of $\tau$. Moreover, if $T\subset S$, the restriction of a partition $\pi$ of $S$ to $T$ is the partition $\pi_{\mid_T}$ obtained by removing from $\pi$ all the elements which do not belong to $T$.\\

There is a beautiful formula, due to Speicher~\cite{Sp94}, related to a special family of set partitions, which gives the expression of the moments $M_n$'s in terms of their respective free cumulants $R_n$'s. Let us recall it.

Denote by $[n]$ the set $\{1,\ldots,n\}$. A partition $\pi=\{A_1,\ldots,A_l\}$ of $[n]$ is said to be a noncrossing partition if and only if  $a,c\in A_i$ and $b,d\in A_j$ implies $i=j$, whenever $1\leq a<b<c<d\leq n$. The set of all the noncrossing partitions of $[n]$ is usually denote by $NC_n$. Its cardinality equals the $n$-th Catalan number $C_n=\frac{1}{n+1}\binom{2n}{n}$. Now, if for all $\pi=\{A_1,\ldots,A_l\}\in NC_n$ we set $R_{\pi}=R_{|A_1|}\cdots R_{|A_l|}$, then the formula of Speicher states that
\[M_n=\sum_{\pi\in NC_n}R_{\pi}.\]

A noncrossing partition $\pi$ of $[n]$ is said to be \textit{irreducible} if and only if $1$ and $n$ lies in the same block of $\pi$. To the best of our knowledge, irreducible noncrossing partitions were introduced by Lehner~\cite{Le02}. According to Lehener's notation, the set of all irreducible noncrossing partitions of $[n]$ will be denoted by $NC_n^{\sirr}$.

By taking the sum of the monomials $R_{\pi}$'s, $\pi$ ranging in $NC_n^{\sirr}$ instead of $NC_n$, one defines a quantity $B_n$ known as a boolean cumulant (see \cite{Le02}),
\begin{equation}\label{id:BvsR}
B_n=\sum_{\pi\in NC_n^{\sirr}}R_{\pi}.
\end{equation}
In particular, if $\mathcal{B}(z)=\sum_{n\geq 1}B_nz^n$, then we have
\begin{equation}\label{id:BvsM}
\mathcal{M}(z)=\frac{1}{1-\mathcal{B}(z)}.
\end{equation}

Note that, a partition of $NC_{n+1}^{\sirr}$ is obtained from a partition of $NC_n$ simply by inserting $n+1$ in the block containing $1$. This fixes a bijection between $NC_n$ and $NC_{n+1}^{irr}$, which proves that $|NC_{n+1}^{irr}|=|NC_n|=C_n.$
If $\mu$ is an integer partition of size $n$, let $\ell(\mu)$ denote the number of its parts $\mu_i$'s, and define $NC_{\mu}^{\sirr}$ to be the subset of $NC_n^{\sirr}$ consisting of all the partitions of \textit{type} $\mu$, namely the partitions $\pi=\{A_1,\ldots,A_l\}$ such that the sequence $(|A_1|,\ldots,|A_l|)$ is a rearrangement of $\mu$. It can be shown that, if exactly $m_i(\mu)$ parts of $\mu$ are equal to $i$, and if $m(\mu)!=m_1(\mu)!\cdots m_n(\mu)!$, then we have
\begin{equation}\label{id:irr_lambda}|NC_{\mu}^{irr}|=\frac{(n-2)_{\ell(\mu)-1}}{m(\mu)!}.\end{equation}
The notion of noncrossing partition can be given for any totally ordered set $S$. In particular, $NC_S^{\sirr}$ will denote the set of all the noncrossing partitions of $S$, such that the minimum and the maximum of $S$ lies in the same block. Let us introduce a partial order on $NC_S^{\sirr}$.
\begin{defn}
Let $\tau,\pi\in NC_S^{\sirr}$. We set $\tau\mirr\pi$ if and only if $\tau\leq\pi$ and the restriction $\tau_{\mid_A}$, of $\tau$ to each block $A$ of $\pi$, is in $NC_A^{\sirr}$. In particular,  we say that $\pi$ \textit{covers} $\tau$ if and only if $\tau\mirr\pi$ and $\pi$ is obtained by joining two blocks of $\tau$.
\end{defn}
For instance, let $\tau=\{\{1,5\},\{2,3\},\{4\}\}$, $\pi=\{\{1,2,3,5\},\{4\}\}$ and $\pi'=\{\{1,5\},\{2,3,4\}\}$. Then $\tau,\pi,\pi'\in NC_5^{irr}$ and $\tau$ refines both $\pi$ and $\pi'$. However, $\tau\mirr\pi$ and in particular $\pi$ covers $\tau$, while it is not true that $\tau\mirr\pi'$, since $\tau_{\mid_{\{2,3,4\}}}=\{\{2,3\},\{4\}\}$ is not irreducible.\\

The singletons (i.e. blocks of type $\{i\}$) of the noncrossing partitions will play a special role. For all $\tau\in NC_n$ we denote by $U(\tau)$ the subset of $[n]$ consisting of all the integers $i$ such that $\{i\}$ is a block of $\tau$, while $\taup$ will be the partition obtained from $\tau$ by removing the singletons. When $\tau,\pi\in NC_n^{\sirr}$ and $\tau\mirr\pi$, then $\pi_{\tau}$ is the restriction of $\pi$ to $U(\tau)$. Note that $\pi_{\tau}\in NC_{U(\tau)}$.\\

We define a tree-representation for the partitions of $NC_n^{\sirr}$ in the following way. Assume $\tau=\{A_1,\ldots,A_l\}\in NC_n^{irr}$ and $\min A_i<\min A_{i+1}$. Construct a labeled rooted tree $t_{\tau}$ by the following steps,
\begin{itemize}
\item choose $A_1$ as the root of $t_{\tau}$,
\item if $2\leq i<j\leq l$ then draw an edge between $A_i$ and $A_j$ if and only if $j$ is the lowest integer such that $\min A_i<\min A_j<\max A_j<\max A_i$,
\item label each edge $\{A_i,A_j\}$ with $\min A_j.$
\end{itemize}
For example, if $\tau=\{\{1,2,7,12\},$ $\{3,5,6\},$ $\{4\},$ $\{8,9\},$ $\{10,11\}\}$ then $t_{\tau}$ is the following tree,
\[\small\xymatrix{
&\{1,2,7,12\}\ar@{-}[dl]_{3}\ar@{-}[d]_{8}\ar@{-}[dr]^{10}&\\
\{3,5,6\}\ar@{-}[d]_{4}&\{8,9\}&\{10,11\}\\
\{4\}&&\\
}\]
Now, let $E(\tau)$ be the set of labels of $t_{\tau}$, and choose $j\in E(\tau)$. We denote by $t_{\tau,j}$ the tree obtained from $t_{\tau}$ by deleting the edge labeled by $j$ and joining its nodes (i.e. joining the blocks). In the following, we will say that $t_{\tau,j}$ is the tree obtained from $t_{\tau}$ by {\lq\lq removing\rq\rq} $j$. Hence, $t_{\tau,3}$ is given by
\[\small\xymatrix{
&\{1,2,3,5,6,7,12\}\ar@{-}[dl]_{4}\ar@{-}[d]_{8}\ar@{-}[dr]^{10}&\\
\{4\}&\{8,9\}&\{10,11\}\\
}\]
Of course, $t_{\tau,j}$ is the tree-representation of an irreducible noncrossing partition, here denoted by $\tau_{\{j\}}$, whose blocks are the nodes of $t_{\tau,j}$. By construction, we have $\tau\mirr\tau_{\{j\}}$ and $E(\tau_{\{j\}})=E(\tau)-\{j\}$.
More generally, given a subset $S\subseteq E(\tau)$, we denote by $\tau_S$ the only partition whose tree $t_{\tau_S}$ is obtained from $t_{\tau}$ by removing all labels in $S$ successively. We remark that $\tau_S$ depends only on the set $S$ and not on the order in which labels are choosen. In the example, if $S=\{3,8\}$ then $t_{\tau,S}$ is the tree below,
\[\small\xymatrix{
&\{1,2,3,5,6,7,8,9,12\}\ar@{-}[dl]_{4}\ar@{-}[dr]^{10}&\\
\{4\}&&\{10,11\}\\
}\]
This way, we have $\tau_S=\{\{1,2,3,5,6,7,8,9,12\},\{4\},\{10,11\}\}$. The following proposition is easy to prove.
\begin{prop}
Let $\tau,\pi\in NC_n^{\sirr}$. Then, we have $\tau\mirr\pi$ if and only if $\pi=\tau_S$ for some $S\subseteq E(\tau)$. In particular, if $\ell(\tau)$ is the number of blocks of $\tau$, then we have
$$|\{\pi\,|\,\tau\mirr\pi\}|=|2^{E(\tau)}|=2^{\ell(\tau)-1},$$
$2^{E(\tau)}$ denoting the powerset of $E(\tau)$, and
$$|\{\pi\,|\,\pi \text{ covers } \tau\}|=|E(\tau)|=\ell(\tau)-1.$$
\end{prop}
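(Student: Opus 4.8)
The plan is to prove the two displayed equalities by first establishing the bijection $S\mapsto \tau_S$ between $2^{E(\tau)}$ and $\{\pi\mid \tau\mirr\pi\}$, and then counting. The forward direction is essentially given to us by the construction preceding the statement: for any $S\subseteq E(\tau)$ we have $\tau\mirr\tau_S$, because removing a label from $t_\tau$ corresponds to joining the block $A_j$ (the child) into its parent $A_i$ at the position between $\min A_i$ and $\max A_i$, and this operation, performed repeatedly, only ever merges a block into a block whose minimum is smaller and whose maximum is larger; hence the restriction of $\tau$ to each resulting block has its own minimum and maximum in a common block, i.e.\ is irreducible. I would spell this out by induction on $|S|$, using transitivity of $\mirr$ (which itself needs a one-line check from the definition) together with the single-label case $\tau\mirr\tau_{\{j\}}$ already noted in the text.

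The substantive direction is: if $\tau\mirr\pi$ then $\pi=\tau_S$ for a (unique) $S$. Here I would take $S$ to be the set of labels of $t_\tau$ lying on edges $\{A_i,A_j\}$ such that $A_i$ and $A_j$ end up in the same block of $\pi$ — equivalently, $S=\{\,\min A_j : A_j \text{ is not the $\leq$-minimal block of } \pi\text{'s block containing it}\,\}$. One must check that removing exactly these labels from $t_\tau$ reconstructs $\pi$. The key point is that the tree $t_\tau$ records, for each non-root block $A_j$, which block $A_i$ is its "nearest enclosing" block in the nesting order; when $\tau\mirr\pi$, the hypothesis that $\tau_{\mid_A}\in NC_A^{\sirr}$ for each block $A$ of $\pi$ forces the blocks of $\tau$ merged into a single block $A$ of $\pi$ to form a connected subtree of $t_\tau$ containing the $\leq$-minimal one as its apex (if they did not form a connected subtree, or if the "root" of that subtree were not the block carrying $\min A$ and $\max A$, then $\tau_{\mid_A}$ would have its minimum or maximum isolated from the rest, contradicting irreducibility). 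Once connectivity is established, contracting precisely the internal edges of each such subtree yields exactly $\pi$, and the set of contracted labels is the $S$ described above; uniqueness of $S$ follows since distinct label-sets give trees with distinct edge-label sets $E(\tau_S)=E(\tau)\setminus S$, hence distinct partitions.

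With the bijection in hand, the counting is immediate: $|\{\pi\mid\tau\mirr\pi\}|=|2^{E(\tau)}|=2^{|E(\tau)|}$, and since $t_\tau$ is a tree on $\ell(\tau)$ nodes it has exactly $\ell(\tau)-1$ edges, so $|E(\tau)|=\ell(\tau)-1$, giving $2^{\ell(\tau)-1}$. For the covering statement, $\pi$ covers $\tau$ means $\tau\mirr\pi$ with $\pi$ obtained by joining two blocks of $\tau$; under the bijection this corresponds to $|S|=1$ (removing a single edge of $t_\tau$ merges exactly two nodes, and conversely joining two blocks of $\tau$ while staying in $NC_n^{\sirr}$ with $\tau\mirr\pi$ can only be the removal of a single label — the two joined blocks must be tree-adjacent, else the restriction to the merged block fails to be irreducible as above). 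Hence $|\{\pi\mid \pi\text{ covers }\tau\}|=|E(\tau)|=\ell(\tau)-1$.

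I expect the main obstacle to be the rigorous verification, in the hard direction, that the blocks of $\tau$ collapsed into one block of $\pi$ form a connected subtree of $t_\tau$ rooted at the block carrying the minimum and maximum of that $\pi$-block; this is where the precise definition of the edges of $t_\tau$ (via the "lowest $j$ with $\min A_i<\min A_j<\max A_j<\max A_i$") and the irreducibility of each $\tau_{\mid_A}$ must be combined carefully, and it is genuinely the content behind the word "easy" in the statement. Everything else — transitivity of $\mirr$, the single-label base case, and the edge count of a tree — is routine.
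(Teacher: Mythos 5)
The paper offers no proof of this proposition at all (it is dismissed as ``easy to prove''), so there is nothing to compare against line by line; your plan is the natural one and, as a whole, it is correct: establish the bijection $S\mapsto\tau_S$ between $2^{E(\tau)}$ and $\{\pi\,|\,\tau\mirr\pi\}$, then count using the fact that a tree on $\ell(\tau)$ nodes has $\ell(\tau)-1$ edges. The forward direction via the single-contraction case plus transitivity of $\mirr$ (which does hold, by the short argument you indicate), the injectivity via $E(\tau_S)=E(\tau)\setminus S$, and the identification of covers with $|S|=1$ are all sound.

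The one place where your stated justification is not quite the right mechanism is the connectivity claim in the hard direction. You attribute it to irreducibility of the restrictions (``the minimum or maximum would be isolated from the rest''), but irreducibility of $\tau_{\mid_A}$ only guarantees that a single block $B_0$ of $\tau$ contains both $\min A$ and $\max A$; it does not by itself prevent some other block $B\subseteq A$ from having its nesting-parent $C$ in $t_\tau$ lie in a \emph{different} block $A'$ of $\pi$, which is exactly what would break connectivity. What rules this out is that $\pi$ is noncrossing: if $C\subseteq A'$ with $C\neq B_0$ the minimal block nesting $B$, then $\min B_0<\min C<\min B<\max C$ exhibits a crossing between $A$ and $A'$. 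With that in hand, every non-$B_0$ block of $\tau$ inside $A$ has its parent inside $A$, and induction up the tree gives the connected subtree rooted at $B_0$. So the ingredient you need to add is the noncrossing property of $\pi$ itself, not a finer use of irreducibility; once that is inserted, your argument goes through as planned.
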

\subsection{The Cayley graph of $\frak{S}_n$}
We start recalling some known results relating noncrossing partitions to the symmetric group in order to describe the partial
order $\mirr$ in terms of permutations.

The Cayley graph of $\frak{S}_n$ is the graph whose nodes are the elements of $\frak{S}_n$ and $w,u\in\frak{S}_n$ are connected by an edge if and only if there exists a transposition $t$ such that $u=wt$.

Denote by $T_n$ the set of all transpositions of $\frak{S}_n$ and for all $w\in\frak{S}_n$ let $\ell_T(w)$ denote the minimum number of transpositions in $T_n$ whose product equals $w$. If we set $u\leq_{T} w$ if and only if $\ell_T(w)=\ell_T(u)+\ell_T(u^{-1}w)$, then we obtain a partial order on $\frak{S}_n$, sometimes called the \textit{absolute order}, whose Hasse diagram can be identified with the Cayley graph.

Biane~\cite{B97} has shown that the lattice $(NC_n,\leq)$ can be embedded into the Cayley graph of $\frak{S}_n$ through a map, here denoted by $\beta$, such that $\tau\leq\pi$ if and only if $\beta(\tau)\leq_T\beta(\pi)$. This embedding has a quite simple description. If $A=\{i_1,\ldots,i_h\}\subseteq [n]$ and $i_1<\cdots<i_h$, let $\beta(A)=(i_1\ldots i_h)\in \frak{S}_n$. Then, set $\beta(\tau)=\beta(A_1)\cdots\beta(A_l)$ whenever $\tau=\{A_1,\ldots,A_l\}$. This way, if $NC(\frak{S}_n)=\{\beta(\tau)\,|\,\tau\in NC_n\}$, then $NC(\frak{S}_n)$ is the interval $[id_n,c_n]=\{w\in\frak{S}_n\,|\,id_n\leq_T w\leq_T c_n\}$ where $id_n=(1)\cdots(n)$ and $c_n=(1\,\ldots\,n)$. Moreover, it is easy to see that if $NC^{\sirr}(\frak{S}_n)=\{\beta(\tau)\,|\,\tau\in NC_n^{\sirr}\}$, then $NC^{\sirr}(\frak{S}_{n+1})=\{(1\,n+1)w\,|\,w\in NC(\frak{S}_n)\}$.\\

Incidentally, the Biane map $\beta$ allows us to obtain a further enumerative result. Indeed, consider the expression of a permutation $w$ as a product of its disjoint cycles, $w=(i_{1,1}\ldots i_{1,n_1})\cdots (i_{l,1}\ldots i_{l,n_l})$. For $1\leq h\leq l$ define
\[T_{(i_{h,1}\ldots i_{h,n_h})}=\{(i\,j)\in T_n\,|\,i_{h,1}\leq i<j< i_{h,n_h}\},\]
and then set
\[T_w=\bigcup_{1\leq h\leq l} T_{(i_{h,1}\ldots i_{h,n_h})}.\]
Now, the following proposition is easy to prove.
\begin{prop}
Let $\tau,\pi\in NC_n^{irr}$, $u=\beta(\tau)$ and $w=\beta(\pi)$. Then, $\pi$ covers $\tau$ if and only if there exists $t\in T_w$ such that $u=wt.$ In particular, we have
\[|\{\tau\in NC_n^{irr}\,|\,\pi \text{ covers } \tau\}|=|T_w|=\sum_{A\in\tau}\binom{|A|-1}{2}.\]
\end{prop}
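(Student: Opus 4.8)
The plan is to translate everything through the Biane embedding $\beta$ and reduce the claim to the covering description in the previous proposition. First I would recall that $\pi$ covers $\tau$ in $(NC_n^{\sirr},\mirr)$ precisely when $\tau\mirr\pi$ and $\pi$ is obtained by joining exactly two blocks of $\tau$; by the Proposition on the tree-representation, these are exactly the partitions $\tau_{\{j\}}$ for $j\in E(\tau)$, so $|\{\tau'\mid\tau'\text{ is covered by }\pi\}|$ should be counted by running over the \emph{lower} covers of a fixed $\pi$. So I would fix $\pi\in NC_n^{\sirr}$, set $w=\beta(\pi)$, and ask: for which $u\in NC^{\sirr}(\frak{S}_n)$ is $u=wt$ with $\ell_T(u)=\ell_T(w)+1$ and $\beta^{-1}(u)\mirr\pi$?

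The key step is the following local observation about multiplying a cycle by a transposition. If $B=\{i_1<\cdots<i_h\}$ is a block of $\pi$, then $\beta(B)=(i_1\,i_2\,\cdots\,i_h)$, and multiplying $\beta(B)$ on the right by a transposition $(i\,j)$ with $i,j\in B$ splits the cycle into two cycles; this splitting corresponds to refining $B$ into two blocks in a noncrossing way, and the refinement keeps $\min B$ and $\max B$ together (hence lands in $NC_B^{\sirr}$, as required by $\mirr$) if and only if $i_1$ and $i_h$ stay in the same piece, which happens exactly when $i_1\le i<j<i_h$, i.e. when $(i\,j)\in T_{\beta(B)}$. Transpositions involving two different blocks of $\pi$, or an element outside the support of $w$, either increase the number of blocks incorrectly or produce a crossing, so they do not contribute. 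Thus the lower covers of $\tau$... rather, the lower covers $u=wt$ correspond bijectively to $\bigcup_{B\in\pi}T_{\beta(B)}=T_w$ (the union being disjoint since the supports of the cycles are disjoint), which gives the first equality $|\{\tau\in NC_n^{irr}\mid\pi\text{ covers }\tau\}|=|T_w|$.

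For the second equality I would simply count $|T_w|$ directly from its definition: $T_w=\bigcup_{B\in\pi}\{(i\,j)\mid \min B\le i<j<\max B\}$, and for a block $B$ with $|B|=a$, writing $\min B=i_1$ and $\max B=i_a$, the pairs $(i,j)$ with $i_1\le i<j<i_a$ that actually arise are those with $i,j\in B$ (a transposition not supported on $B$ would not split the cycle $\beta(B)$), and there are $\binom{a-1}{2}$ such pairs since we choose two elements from $B\setminus\{\max B\}$. Summing over blocks gives $\sum_{A\in\tau}\binom{|A|-1}{2}$ — here the sum is over the blocks of $\tau$... I mean of $\pi$; since $\pi$ covers $\tau$ the block sizes relevant to the statement are those of $\pi$, but the formula as written sums over $A\in\tau$, which matches because each lower cover $\tau$ of $\pi$ has the same total $\sum\binom{|A|-1}{2}$ only up to the single split — so I would double-check the indexing and state the identity for $\pi$, noting $|T_w|$ depends only on $w=\beta(\pi)$.

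The main obstacle I anticipate is the careful bookkeeping in that local cycle-splitting lemma: one must verify both directions — that every $(i\,j)\in T_w$ yields a genuine lower cover landing in $NC_n^{\sirr}$ with $\tau\mirr\pi$, and conversely that every lower cover arises this way with a \emph{unique} transposition — and must confirm that the noncrossing condition and the irreducibility condition on the restriction $\tau_{\mid_B}$ translate exactly into the inequality $i_1\le i<j<i_h$. This is essentially a repackaging of Biane's description of $\leq_T$ restricted to the noncrossing interval, so the argument is routine, but getting the endpoints of the inequalities right (strict versus non-strict, and why $\max B$ is excluded but $\min B$ is not) is where an error would most likely creep in.
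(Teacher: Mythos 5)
The paper gives no proof of this proposition at all (it is dismissed with ``the following proposition is easy to prove''), so there is nothing to compare your argument against; judged on its own, your plan is correct, and the local cycle-splitting lemma you isolate is exactly the right key step. Concretely, for $\beta(B)=(a_1\,a_2\cdots a_h)$ with $a_1<\cdots<a_h$ and $t=(a_s\,a_r)$, $s<r$, one has $\beta(B)\,t=(a_1\cdots a_s\,a_{r+1}\cdots a_h)(a_{s+1}\cdots a_r)$, so the split keeps $a_1=\min B$ and $a_h=\max B$ together precisely when $r<h$, i.e. when $j=a_r<\max B$ (the condition $i\geq\min B$ being automatic), the two resulting blocks are automatically mutually noncrossing since one is a contiguous run of $B$ nested inside the other, and conversely the noncrossing condition forces every irreducible two-block refinement of $B$ to have this form, for a unique $t$; transpositions joining two distinct cycles (or a cycle and a fixed point) merge blocks and so move upward in $\leq_T$, hence cannot yield a lower cover. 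You were also right to distrust the statement on the two points you flag: $T_w$ must be read as consisting of transpositions whose entries lie in the cycle (otherwise $w=(1\,3)(2)$ gives $|T_w|=1$ while $\pi=\{\{1,3\},\{2\}\}$ covers nothing in $NC_3^{\sirr}$), and the final sum must run over the blocks of $\pi$, not of $\tau$ (for $\pi=\{\{1,2,3\}\}$ with unique lower cover $\tau=\{\{1,3\},\{2\}\}$ the sum over $\tau$ gives $0$ rather than the correct value $1$). The only slip is cosmetic: a transposition linking two cycles \emph{decreases} the number of cycles of $w$, i.e. merges blocks, rather than ``increasing the number of blocks''; your conclusion that such $t$ contribute nothing is nevertheless correct.
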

\subsection{Left-to-right minima and maxima}
Let $w=w_1\ldots w_n$ be a permutation of $\frak{S}_n$ written as a word, that is $w_i=w(i)$. A \textit{left-to-right maximum} of $w$ is an integer $w_i$ such that $w_j<w_i$ for all $j<i$. Analogously, a \textit{left-to-right minimum} of $w$ is an integer $w_i$ such that $w_j>w_i$ for all $j<i$. Of course, every $w\in\frak{S}_n$ has a trivial left-to-right minimum in $1$, and a trivial left-to-right maximum in $n$. Denote by $\Max(w)$ and $\Min(w)$ the sets of all the nontrivial left-to-right maxima and left-to-right minima of $w$, respectively.

There is a well known bijection, named the \textit{Foata bijection}, showing that the number of permutations in $\frak{S}_n$ with $k$ cycles equals the number of permutations in $\frak{S}_n$ with $k$ left-to-right maxima. In this paper we use two slight different versions of the Foata bijection which we are going to describe.\\

Let $w=(i_{1,1}\ldots i_{1,n_1})\cdots (i_{l,1}\ldots i_{l,n_l})\in NC^{\sirr}(\frak{S}_n)$. Arrange the cycles of $w$ in decreasing order of their minima from left to right, and define $\check{w}$ to be the permutation (in the word notation) obtained by removing the parenthesis. Clearly, the minimum of each cycle in $w$ is a left-to-right minimum of $\check{w}$. Moreover, it is easy to see that the map $w\rightarrow\check{w}$ is a bijection.

Now, consider the same permutation $w$. First, arrange the cycles in decreasing order of their maxima from left to right. If $w'$ is the word obtained by removing the parenthesis, then let $\hat{w}$ denote the reflection of $w$ with respect to its middle-point, that is $\hat{w}_i=w'_{n-i+1}$. This way, the maximum of each cycle of $w$ is a left-to-right-maximum of $\hat{w}$, and the map $w\rightarrow\hat{w}$ is a bijection too. Note also that, if $\beta(\tau)=w$, then we have $\Min(\check{w})\cap\Max(\hat{w})=U(\tau)$. In fact, $\Min(\check{w})\cap\Max(\hat{w})$ consists of the fixed points of $w$, that is the singletons of $\tau$.\\

For example, consider $w=(1\,2\,10)$ $(4)$ $(5\,6\,7)$ $(3)$ $(8\,9)\in NC^{\sirr}(\frak{S}_{10})$. By arranging the cycles in decreasing order of their minima we have $w=(8\,9)$ $(5\,6\,7)$ $(4)$ $(3)$ $(1\,2\,10)$, then $\check{w}=8\,9\,5\,6\,7\,4\,3\,1\,2\,10$ and $\Min(\check{w})=\{3,4,5,8\}.$

Therefore, if we arrange the cycles in decreasing order of their maxima we obtain $w=(1\,2\,10)$ $(8\,9)$ $(5\,6\,7)$ $(4)$ $(3)$, so that $w'=1\,2\,10\,8\,9\,5\,6\,7\,4\,3$ and finally $\hat{w}=3\,4\,7\,6\,5\,9\,8\,10\,2\,1$. This way, $\Max(\hat{w})=\{3,4,7,9\}$.
\begin{prop}
Let $\tau,\pi\in NC_n^{irr}$, $w=\beta(\tau)$ and $u=\beta(\pi)$. If $\tau\mirr\pi$ then $\Min(\check{u})\subseteq \Min(\check{w})$ and $\Max(\hat{u})\subseteq \Max(\hat{w})$. Moreover, once fixed $\tau$, the maps $\pi\in\{\pi\,|\,\tau\mirr\pi\}\rightarrow\Min(\check{u})$ and $\pi\in\{\pi\,|\,\tau\mirr\pi\}\rightarrow\Max(\hat{u})$ are bijections.
\end{prop}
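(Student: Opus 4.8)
The plan is to reduce everything to the tree-representation machinery of Proposition 3.1, because that proposition already tells us that $\{\pi \mid \tau \mirr \pi\}$ is naturally indexed by $2^{E(\tau)}$, and $|E(\tau)| = \ell(\tau)-1$. So it suffices to show that each of the two maps $\pi \mapsto \Min(\check u)$ and $\pi \mapsto \Max(\hat u)$ is injective with image contained in the powerset of a fixed $(\ell(\tau)-1)$-element set; surjectivity onto that powerset then follows by cardinality, and the containment $\Min(\check u)\subseteq\Min(\check w)$, $\Max(\hat u)\subseteq\Max(\hat w)$ drops out of identifying that fixed set with $\Min(\check w)$ and $\Max(\hat w)$ respectively. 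I would first record the easy observation that for any $v\in NC^{\sirr}(\frak S_n)$ with $v=\beta(\rho)$, the set $\Min(\check v)$ consists exactly of the minima of those cycles of $v$ that are \emph{not} the block containing $1$ — equivalently, $\Min(\check v) = \{\min A \mid A \in \rho,\ 1\notin A\}$ — since when the cycles are written in decreasing order of their minima, the minimum of each cycle starts a new ``descent to a smaller value'' and nothing before it in $\check v$ is smaller, while the non-minimal entries of a cycle $(i_1\ldots i_h)$ with $i_1$ its minimum are all larger than $i_1$ which already appeared. Dually, $\Max(\hat v) = \{\max A \mid A\in\rho,\ n\notin A\}$. (Note that the block containing $1$ also contains $n$, by irreducibility, so the two excluded blocks coincide; its minimum is $1$ and its maximum is $n$, the trivial extrema, consistent with $\Max$ and $\Min$ recording only nontrivial ones.)

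With this description, the first claim is immediate: if $\tau \mirr \pi$ then each block $B$ of $\pi$ with $1\notin B$ is a union of blocks of $\tau$, and $\min B = \min A$ for exactly one block $A$ of $\tau$ contained in $B$ (the one achieving the minimum), so $\Min(\check u)\subseteq\Min(\check w)$; dually for $\Max$. For injectivity, I would use the tree $t_\tau$ directly. Recall $\pi = \tau_S$ for a unique $S\subseteq E(\tau)$, and the non-minimum, non-$1$-block structure of $\tau_S$: the blocks of $\tau_S$ not containing $1$ are obtained by contracting the edges of $t_\tau$ whose labels lie in $S$, and the minimum of such a contracted block is $\min A_i$ where $A_i$ is the node of $t_\tau$ closest to the root among those merged. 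The key combinatorial fact, which I would verify from the construction of $t_\tau$, is that the edge labels in $E(\tau)$ are precisely $\{\min A_i \mid 2\le i\le l\} = \Min(\check w)$ (each non-root node $A_i$ contributes the label $\min A_i$ on the unique edge joining it to its parent), and that a label $j = \min A_j$ survives in $\Min(\check{\widehat{\tau_S}})$ if and only if the edge labeled $j$ is \emph{not} contracted, i.e. $j\notin S$. Hence $\Min(\check u) = E(\tau)\setminus S$, which recovers $S$ from $\pi$ uniquely — this gives injectivity, and in fact exhibits the map $\pi\mapsto\Min(\check u)$ as the complement of the bijection $S\leftrightarrow\tau_S$, hence a bijection onto $2^{E(\tau)}=2^{\Min(\check w)}$.

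The dual statement for $\Max(\hat u)$ is handled symmetrically, but here is where I expect the one genuine subtlety: the tree $t_\tau$ is built from the \emph{minima} of blocks and its edge labels are minima, so it is not symmetric under the min/max swap, and I need to check that contracting the edge labeled $j=\min A_j$ (which merges $A_j$ into its parent $A_i$, with $\min A_i < \min A_j < \max A_j < \max A_i$) indeed removes $\max A_j$ from the set $\{\max A \mid A\in\tau_S,\ n\notin A\}$ — that is, that $\max A_j$ is strictly less than the maximum of the merged block, which holds precisely because $\max A_j < \max A_i \le \max(\text{merged block})$. Conversely the maxima of surviving blocks are exactly the maxima $\max A$ over nodes $A$ that remain ``maximal in their contracted component'', and one must check this set has size $|E(\tau)\setminus S|$ and determines $S$. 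I would do this by showing $\pi\mapsto\Max(\hat u)$ and $\pi\mapsto\Min(\check u)$ have the same fibers: both equal $2^{E(\tau)}$-indexed bijections with $|\Max(\hat u)| = |\Min(\check u)| = \ell(\pi)-1$ (the number of non-$1$-blocks of $\pi=\tau_S$, which is $\ell(\tau)-1-|S|$). Matching these cardinalities for all $S$ forces both maps to be bijections onto the $2^{\ell(\tau)-1}$-element powersets, with $\Max(\hat u)\subseteq\Max(\hat w)$, $\Min(\check u)\subseteq\Min(\check w)$ from the first paragraph. The main obstacle is thus the bookkeeping in the previous sentence — verifying that edge contraction in $t_\tau$ corresponds cleanly to deletion from $\Min(\check w)$ on one side and from $\Max(\hat w)$ on the other — but this is entirely a matter of carefully reading off extrema from the tree construction, not a conceptual difficulty.
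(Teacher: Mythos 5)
Your overall route is the same as the paper's: identify $E(\tau)$ with $\Min(\check{w})$, reduce everything to Proposition~3.1 via the correspondence $\pi=\tau_S$, and treat the maxima by relabelling the tree $t_{\tau}$ with block maxima. The descriptions $\Min(\check{v})=\{\min A \mid A\in\rho,\ 1\notin A\}$ and $\Max(\hat{v})=\{\max A\mid A\in\rho,\ n\notin A\}$ are correct, as is your analysis of what contracting an edge does to the sets of block minima and maxima (in particular the inequality $\max A_j<\max A_i$ coming from the nesting condition defining $t_{\tau}$). The minima half of the argument is complete: $\Min(\check{u})=E(\tau_S)=E(\tau)\setminus S$ recovers $S$, so the map is complementation, a bijection onto $2^{\Min(\check{w})}$.

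The one step that does not work as written is your conclusion for the maxima: arguing that the two maps ``have the same fibers'' because $|\Max(\hat{u})|=|\Min(\check{u})|=\ell(\pi)-1$ for every $\pi$ is not a valid deduction --- equal cardinalities of the images do not force injectivity, let alone equality of fibers. Fortunately you have already assembled everything needed for the direct argument. The blocks of $\tau_S$ other than the one containing $n$ correspond to the non-root components of the contracted tree; the maximum of each such block is $\max A_j$ for the topmost node $A_j$ of its component; and $A_j$ is topmost exactly when its parent edge label $\min A_j$ does not lie in $S$. Hence $\Max(\hat{u})=\{\max A_j \mid \min A_j\in E(\tau)\setminus S\}$, and since distinct blocks of $\tau$ have distinct maxima, the assignment $\min A_j\mapsto\max A_j$ is a bijection from $E(\tau)$ onto $\Max(\hat{w})$; composing it with complementation of $S$ exhibits $\pi\mapsto\Max(\hat{u})$ as a bijection onto $2^{\Max(\hat{w})}$, and injectivity (hence the recovery of $S$) is immediate. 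This is exactly the paper's remark that Proposition~3.1 survives relabelling the edges of $t_{\tau}$ by $\max A_j$ instead of $\min A_j$; replace the cardinality argument with this observation and your proof is complete.
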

\begin{proof}
Note that, the labels of the tree $t_{\tau}$ are exactly the nontrivial minima of the cycles of $w$, that is $E(\tau)=\Min(\check{w})$. Moreover, if we define a new label on $t_{\tau}$ by replacing $\min A_j$ with $\max A_j$, then Proposition~3.1 is again true. So, the proof follows by means of Proposition~3.1.\\
\end{proof}
\subsection{Descents, excedances and pattern-avoiding permutations}
An integer $i\in [n-1]$  is a \textit{descent} for a permutation $w=w_1\ldots w_n\in\frak{S}_n$ if $w_i>w_{i+1}$, while it is called an \textit{excedance} of $w$ if $w_i>i$. We denote by $\Des(w)$  the set of all the descents of $w$, and by $\Exc(w)$ the set of all its excedances.\\

Consider the poset $NC_n$ under the refinement order. Following B\'ona and Simion~\cite{BS}, let $P_n$ denote the set of all $132$-avoiding permutations of $\frak{S}_n$, and let $Q_n$ denote the set of all $321$-avoiding permutations of $\frak{S}_n$. A partial order can be introduced on $P_n$ and $Q_n$ by assuming $u\leq w$ in $P_n$ (resp. in $Q_n$) if and only if $\Des(u)\subseteq\Des(w)$ (resp. $\Exc(u)\subseteq\Exc(w)$).  Then, there are two order-preserving bijections $f:NC_n\rightarrow P_n$ and $\theta:NC_n\rightarrow Q_n$ with the following properties:
\begin{itemize}
\item $i\geq 1$ is a descent of $f(\tau)$ if and only if $i+1$ is the minimum of its block in $\tau$,
\item $i\geq 1$ is an excedance of $\theta(\tau)$ if and only if $i+1$ is the minimum of its block in $\tau$.
\end{itemize}
We also observe that, if $\tau\in\ NC_{n+1}^{\sirr}$ and if $w=f(\tau)$, then $w_{n+1}=n+1$. Analogously, if $w=\theta(\tau)$ then $w_{n+1}=n+1$. Finally, this says that the image of $NC_{n+1}^{\sirr}$ under $f$ (resp. $\theta$) can be identified with $P_n$ (resp. $Q_n$).
\section{Kerov polynomial formula}
By means of the results of Section~2 and Section~3, we are able to give a new formula for the Kerov polynomial $\Sigma_k$. In particular, such a formula is related to the order $\mirr$ on the irreducible noncrossing partitions of the set $[k+1]$. Furthermore, the map $\beta$ makes we able to compute Kerov polynomials via the Cayley graph of $\frak{S}_k$.\\

Let $j$ be a nonnegative integer and denote by $\lambda\free j$ the image of the diagram $\lambda$ under the translation of the plane given by $x\rightarrow x+j$. The $i$-th minimum and maximum of $\lambda\free j$ are $x_i+j$ and $y_i+j$ respectively, so that
\[\mathcal{H}_{\lambda\free j}(z)=\frac{\prod_{i=0}^{m}z-(x_i+j)}{\prod_{i=1}^{m}z-(y_i+j)} \text{ and } \mathcal{M}_{\lambda\free j}(z)=\frac{1}{1-jz}\mathcal{M}_{\lambda}\left(\frac{z}{1-jz}\right).\]
In this way we may rewrite \eqref{id:Kerov2} as follows,
\begin{equation}\label{id:Kerov3}
\Sigma_k(R_2(\lambda),\ldots,R_{k+1}(\lambda))=-\frac{1}{k}\,[z^{k+1}]\prod_{j=0}^{k-1}\frac{1}{\mathcal{M}_{\lambda\free j}(z)}.
\end{equation}
Now, let $R_n(\lambda\free j)$ denote the $n$-th free cumulant of $\lambda\free j$, that is the coefficient of $z^n$ in the formal power series $\mathcal{R}_{\lambda\free j}(z)$ such that $\mathcal{M}_{\lambda\free j}(z)=\mathcal{R}_{\lambda\free j}\left( z\mathcal{M}_{\lambda\free j}(z)\right).$ Hence, it is immediate to verify that $\mathcal{R}_{\lambda\free j}(z)=jz+\mathcal{R}_{\lambda}(z)$, or equivalently
\begin{equation}\label{def:lambda_free_j}
R_n(\lambda\free j)=R_n(\lambda)+j\delta_{1,n},
\end{equation}
where $\delta_{1,n}$ is the Kronecker delta.
\begin{thm}[The formula for Kerov polynomials]
We have
\begin{equation}\label{first formula}\Sigma_{k}=\sum_{\tau\in NC_{k+1}^{irr}}\left[\sum_{\pi\,:\,\tau\mirr\pi}(-1)^{\ell(\pi)-1}W_{\tau}(\pi)\right]R_{\taup},\end{equation}
where
\[W_{\tau}(\pi)=\frac{1}{k!}\sum_{w\in\frak{S}_k}(w(1)-1)^{|A_1|}\cdots (w(k)-1)^{|A_k|},\]
if $\pi_{\tau}=\{A_1,\ldots,A_l\}$ and $A_i=\varnothing$ for $i> l.$
\end{thm}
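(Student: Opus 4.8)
The plan is to start from Stanley's formula in the form \eqref{id:Kerov3}, namely
\[
\Sigma_k = -\frac{1}{k}\,[z^{k+1}]\prod_{j=0}^{k-1}\frac{1}{\mathcal{M}_{\lambda\free j}(z)},
\]
and to expand each factor $1/\mathcal{M}_{\lambda\free j}(z)$ using the boolean-cumulant identity \eqref{id:BvsM}: since $\mathcal{M}(z)^{-1} = 1 - \mathcal{B}(z)$ and $\mathcal{B}(z) = \sum_{n\geq 1} B_n z^n$ with $B_n = \sum_{\pi\in NC_n^{\sirr}} R_\pi$ by \eqref{id:BvsR}, each reciprocal moment series is governed by the irreducible noncrossing partitions. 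Substituting $R_n(\lambda\free j) = R_n(\lambda) + j\,\delta_{1,n}$ from \eqref{def:lambda_free_j}, one sees that the effect of the shift $\free j$ is precisely to replace, in each monomial $R_\pi$, every factor $R_1$ coming from a singleton block $\{i\}$ by $j$. Thus $1/\mathcal{M}_{\lambda\free j}(z) = 1 - \sum_{n\geq 1} z^n \sum_{\sigma\in NC_n^{\sirr}} j^{|U(\sigma)|}\, R_{\stackrel{\centerdot}{\sigma}}$, where $U(\sigma)$ is the set of singletons and $\stackrel{\centerdot}{\sigma}$ the partition with singletons removed.

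Next I would take the product over $j = 0, \ldots, k-1$ and extract the coefficient of $z^{k+1}$. Each term in the expansion of the product corresponds to a choice, for each $j$, of either the constant $1$ or one irreducible noncrossing partition $\sigma^{(j)}$ of some block of consecutive... — more precisely, a way of writing $\{1,\ldots,k+1\}$ (or rather distributing $k+1$ as a sum of block sizes across the $k$ factors, with a sign $(-1)$ per chosen factor). The key combinatorial step is to reorganize this sum: glue the chosen irreducible pieces into a single noncrossing partition. Given $\tau\in NC_{k+1}^{\sirr}$, one groups together all product-expansion terms whose glued-up partition is $\tau$. I expect the matching to go as follows: a term contributing to $\tau$ is determined by a partition $\pi$ with $\tau\mirr\pi$ (the blocks of $\pi$ record which pieces of $\tau$ were packaged into the same factor via the tree structure from Proposition~3.1 and the $\tau_S$ construction) together with an assignment of the blocks of $\pi$ to distinct factors $j_1,\ldots,j_{\ell(\pi)}$ among $\{0,\ldots,k-1\}$; the sign is $(-1)^{\ell(\pi)-1}$ after accounting for the global $-\tfrac1k$ and a symmetry, and the singleton powers $j^{|U(\,\cdot\,)|}$ accumulate to give, upon symmetrizing over all injections and dividing by $k!$, exactly the weight
\[
W_\tau(\pi) = \frac{1}{k!}\sum_{w\in\frak{S}_k}(w(1)-1)^{|A_1|}\cdots(w(k)-1)^{|A_k|},
\]
where $\{A_1,\ldots,A_l\} = \pi_\tau$ is the restriction of $\pi$ to $U(\tau)$ (each $A_i$ being the set of singletons of $\tau$ that fall in one block of $\pi$, and $|A_i|$ the exponent on that factor's $j$). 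The normalization $\tfrac1{k!}$ versus $\tfrac1k$ and the number of injective placements $(k)_{\ell(\pi)}$ must be reconciled; this is where I would track constants carefully, using that summing a function of $(j_1,\ldots,j_{\ell(\pi)})$ over injections from $\{1,\ldots,\ell(\pi)\}$ into $\{0,\ldots,k-1\}$ and then over $\frak{S}_k$ by padding with dummy arguments raised to the zeroth power reproduces the stated $w$-sum.

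The main obstacle, and the heart of the proof, is the bijective reorganization of the product expansion: showing that grouping a monomial $z^{k+1}R_{\stackrel{\centerdot}{\tau}}$-contribution across all factors corresponds cleanly to the pair $(\pi, \text{placement})$ with $\tau\mirr\pi$, with the correct sign and multiplicity. This requires the tree-representation machinery: when several irreducible pieces sit in different factors but their underlying blocks, once merged, nest inside one another, the merge is exactly a $\tau_S$ operation, and Proposition~3.1 guarantees that the resulting $\pi$ ranges over precisely $\{\pi : \tau\mirr\pi\}$ as $S$ ranges over $2^{E(\tau)}$. The sign bookkeeping — one $(-1)$ per factor used, i.e. per block of $\pi$, combined with the leading $-\tfrac1k$ — must be shown to collapse to $(-1)^{\ell(\pi)-1}$; I would verify this on the smallest cases ($k=1,2,3$) to fix conventions, then argue the general case by the gluing correspondence. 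Once the correspondence and the constants are pinned down, the formula \eqref{first formula} falls out by collecting the coefficient of each $R_{\stackrel{\centerdot}{\tau}}$.
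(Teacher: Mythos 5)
Your starting point and overall architecture coincide with the paper's: both begin from Stanley's formula in the form \eqref{id:Kerov3}, expand each factor $\mathcal{M}_{\lambda\free j}(z)^{-1}$ via boolean cumulants using \eqref{id:BvsM} and \eqref{id:BvsR}, use \eqref{def:lambda_free_j} to turn each singleton into a power of $j$, and then reorganize the coefficient of $z^{k+1}$ as a sum over pairs $\tau\mirr\pi$. Up to that point the plan is sound.

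The gap sits exactly at the step you flag as ``the heart of the proof,'' and the tool you propose for it is the wrong one. A term of $[z^{k+1}]\prod_{j}\bigl(1-\mathcal{B}_{\lambda\free j}(z)\bigr)$ carries only a set of used factors $T\subseteq\{0,\ldots,k-1\}$, sizes $n_j$ summing to $k+1$, and \emph{abstract} irreducible noncrossing partitions $\sigma^{(j)}\in NC_{n_j}^{\sirr}$; there is no canonical ``glued-up partition'' $\tau\in NC_{k+1}^{\sirr}$, because these data contain no information about how the pieces interleave inside $[k+1]$. The correspondence with pairs $(\tau,\pi)$, $\tau\mirr\pi$, together with an injective placement of the blocks of $\pi$ into the factors, is therefore not a bijection but a many-to-one map, and the proof hinges on computing its multiplicity. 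That multiplicity is supplied by the enumeration \eqref{id:irr_lambda}: the number of $\pi\in NC_{k+1}^{\sirr}$ of type $\mu$ is $(k-1)_{\ell(\mu)-1}/m(\mu)!$, and it is precisely the identity $(k-1)_{\ell(\mu)-1}\,(k-\ell(\mu))!=(k-1)!$ that converts the prefactor $-\tfrac{1}{k}$ into the $\tfrac{1}{k!}$ of $W_{\tau}(\pi)$ and reconciles the constants you defer to ``careful tracking.'' By contrast, the tree representation and Proposition~3.1 (the $\tau_S$ construction) are not what is needed here: once $\pi$ is a genuine partition of $[k+1]$ and each $\tau_i$ a genuine element of $NC_{A_i}^{\sirr}$ for the blocks $A_i$ of $\pi$, the facts that $\tau=\tau_1\cup\cdots\cup\tau_k\in NC_{k+1}^{\sirr}$ and $\tau\mirr\pi$ are immediate from the definition of $\mirr$; Proposition~3.1 only enters afterwards, to rewrite the inner sum of \eqref{first formula} over subsets $S\subseteq E(\tau)$. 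Without invoking \eqref{id:irr_lambda} (or reproving that enumeration) your sign and constant bookkeeping cannot close, and verifying $k=1,2,3$ will not substitute for it.
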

\begin{proof}
Let $B_n(j)$ denote the $n$-th boolean cumulant $B_n(\lambda\free j)$ of $\lambda\free j$. Since $R_1(\lambda)=0$, then from \eqref{id:BvsR} and \eqref{def:lambda_free_j} we deduce
\begin{equation}\label{id:BvsRj}
B_n(j)=\sum_{\pi\in NC_n^{irr}}j^{u(\pi)}R_{\pip}(\lambda),
\end{equation}
where $u(\pi)=|U(\pi)|.$ Via \eqref{id:BvsM} we have $[z^n]\left(\mathcal{M}_{\lambda\free j}(z)\right)^{-1}=-B_n(j)$, then the right-hand side in (\ref{id:Kerov3}) is equal to
\[\small\sum_{\mu}(-1)^{\ell(\mu)-1}m(\mu)!(k-\ell(\mu))!\sum_{w\in\frak{S}_k}\prod_{i=1}^{k}B_{\mu_i}(w(i)-1).\]
Here $\mu=(\mu_1,\ldots,\mu_l)$ ranges over all the integer partitions of size $k+1$ with at most $k$ parts, and $\mu_i=0$ if $i>\ell(\mu)$. However, by taking into account \eqref{id:irr_lambda} we may rewrite it in the following form,
\[\small\frac{1}{k!}\sum_{\pi}(-1)^{\ell(\pi)-1}\sum_{w\in\frak{S}_k}\prod_{i=1}^{k}B_{|A_i|}(w(i)-1),\]
where $\pi=\{A_1,\ldots,A_l\}$ ranges over all the irreducible noncrossing partitions of $[k+1]$ (which in fact have at most $k$ blocks), and $A_i=\varnothing$ if $i>\ell(\pi)$.\\
The second sum in the expression above equals, via identity (\ref{id:BvsRj}), the following quantity,
\[\small\sum_{\tau_1,\ldots,\tau_k}\sum_{w\in\frak{S}_k}(w(1)-1))^{u(\tau_1)}\cdots (w(k)-1))^{u(\tau_k)}R_{\taup_1}(\lambda)\cdots R_{\taup_k}(\lambda),\]
where $\tau_i$ ranges over all $NC_{A_i}^{irr}$, with $NC_{\varnothing}^{irr}=\varnothing$. Now, if we set $\tau=\tau_1\cup\cdots\cup\tau_k$, then $\tau\in NC_{k+1}^{\sirr}$, $\tau\mirr\pi$ and $R_{\taup}(\lambda)=R_{\taup_1}(\lambda)\cdots R_{\taup_k}(\lambda).$ Finally, $u(\tau_i)$ is the number of singletons in $\tau_i=\tau_{\mid_{A_i}}$, that is the cardinality of the set $A_i\cap U(\tau)$, which if nonempty is a block of $\pi_{\tau}$. This completes the proof.
\end{proof}
So, for all integer partitions $\mu$ of size $k+1$, if $\mup$ is obtained from $\mu$ by removing all parts equal to $1$, then the monomial $R_{\mup}=R_{\mup_1}\cdots R_{\mup_l}$ occurs in $\Sigma_k$ with a nonnegative coefficient. Thanks to \eqref{first formula} and Proposition~3.1 we known that such a coefficient is
\[\sum_{\tau\in NC_{\mu}^{\sirr}}\sum_{\pi:\tau\mirr\pi}(-1)^{\ell(\pi)-1}W_{\tau}(\pi)=\sum_{\tau\in NC_{\mu}^{\sirr}}\sum_{S\subseteq E(\tau)}(-1)^{|E(\tau)|-|S|}\,W_{\tau}(S),\]
where $W_{\tau}(S)=W_{\tau}(\pi)$ if $\pi=\tau_S$.

However, via the map $\beta$ the same coefficient can be recovered on the Cayley graph of $\frak{S}_k$. Indeed, if $u^*=(1\,k+1)u$ for all $u\in NC(\frak{S}_k)$ then we may rewrite it in the following form,
\[\sum_{\scriptscriptstyle u\in NC(\frak{S}_k)\atop u^*\in NC_{\mu}^{\sirr}(\frak{S}_{k+1})}\sum_{w:u\leq_T w}(-1)^{\ell(w)-1}\,W_{u}(w),\]
with $\ell(w)$ and $W_u(w)$ defined in the suitable way.

Proposition~3.3 provides connections between Kerov polynomials and left-to-right minima and maxima. While, the maps $f$ and $\theta$ give relations between $\Sigma_k$ and descents and excedances, and allow us to recover $\Sigma_k$ from the posets $P_k$ and $Q_k$ of B\'ona and Simion.\\

Now, let $\{x_0,\ldots,x_{k-1}\}$ be a set of commuting variables and consider the polynomial $\Omega_k(x_0,\ldots,x_{k-1})$ defined by
\[\Omega_k(x_0,\ldots,x_{k-1})=-\frac{1}{k}[z^{k+1}]\prod_{j=0}^{k-1}\frac{1-x_jz}{\mathcal{M}\bigl(\frac{z}{1-x_jz}\bigr)}.\]
Of course, $\Omega_k$ is symmetric with respect to the $x_i$'s. Moreover, by virtue of \eqref{id:Kerov2} we obtain $\Omega_k(0,1,\ldots,k-1)=\Sigma_k$. A formula for $\Omega_k(x_0,\ldots,x_{k-1})$ is obtained simply by replacing $j$ with $x_j$ in \eqref{first formula}. More precisely, if $\mu$ is an integer partition of size $k+1$, then the coefficient of $R_{\mup}$ in the polynomial $\Omega_k(x_0,\ldots,x_{k-1})$ is given by
\begin{equation}\label{id:omega_coeff}\small
\sum_{\scriptscriptstyle\tau\in NC_{\mu}^{\sirr}}\sum_{S\subseteq E(\tau)}(-1)^{|E(\tau)|-|S|}\,W_{\tau}(S;x_0,\ldots,x_{k-1}),
\end{equation}
where $W_{\tau}(S;x_0,\ldots,x_{k-1})$ is simply obtained by replacing $j$ with $x_j$ in the definition of $W_{\tau}(S)$. Let $\lambda_{\tau}(S)$ denote the integer partition corresponding to the type of $\pi_{\tau}$, with $\pi=\tau_S$. Then, it is not difficult to see that the weight \eqref{id:omega_coeff} satisfies
\[k!\,W_{\tau}(S;x_0,\ldots,x_{k-1})=m(\lambda_{\tau}(S))!\,(k-\ell(\lambda_{\tau}(S)))!\,\mathbf{m}_{\lambda_{\tau}(S)}(x_0,\ldots,x_{k-1}),\]
$\mathbf{m}_{\lambda_{\tau}(S)}(x_0,\ldots,x_{k-1})$ being the monomial symmetric function indexed by $\lambda_{\tau}(S)$ ~\cite{M95}.
So that the coefficient of $R_{\mup}$ in $\Omega_k(x_0,\ldots,x_{k-1})$ is a symmetric function of degree $m_1(\mu)$. Denote it by $\mathbf{g}_{\mu}(x_0,\ldots,x_{k-1})$ and assume
\[\mathbf{g}_{\mu}(x_0,\ldots,x_{k-1})=\sum_{\lambda}g_{\mu,\lambda}\,\mathbf{m}_{\lambda}(x_0,\ldots,x_{k-1}).\]
The left-hand side of \eqref{id:omega_coeff} assures us that, for every $\lambda$ of size $m_1(\mu)$ we have
\[g_{\mu,\lambda}=\frac{1}{k!}\sum_{\tau\in NC_{\mu}^{\sirr}}\sum_{S\subseteq E(\tau)\atop \lambda_{\tau}(S)=\lambda}(-1)^{|E(\tau)|-|S|}\,m(\lambda)!(k-\ell(\lambda))!,\]
hence the $g_{\mu,\lambda}$'s are rational numbers. Moreover,
since $\mathbf{g}_{\mu}(0,\ldots,k-1)$ is the coefficient of
$R_{\mup}$ in $\Sigma_k$, then the Kerov conjecture implies it is
a nonnegative integer. Hence, we may check if the
$g_{\mu,\lambda}$'s are nonnegative integers too. This is not
true. In fact, we have
\[\mathbf{g}_{(3,1,1,1)}=\frac{4}{5}\mathbf{m}_{(1,1,1)}-\frac{3}{5}\mathbf{m}_{(1,2)}+\frac{4}{5}\mathbf{m}_{(3)}.\]
More generally, the expansion of $\mathbf{g}_{(3,1,1,1)}(x_0,\ldots,x_{k-1})$ in terms of  all the classical basis of the ring of symmetric functions, namely elementary functions $\mathbf{e}_{\lambda}$, complete homogeneous functions $\mathbf{h}_{\lambda}$, power sum functions $\mathbf{p}_{\lambda}$ and Schur functions $\mathbf{s}_{\lambda}$, have rational coefficients which are not positive integers. In fact we have,
\begin{align}\mathbf{g}_{(3,1,1,1)}&=\frac{14}{5}\mathbf{h}_{(1,1,1)}-7\mathbf{h}_{(1,2)}+5\mathbf{h}_{(3)}\nonumber\\
                                                       &=\frac{4}{5}\mathbf{e}_{(1,1,1)}-3\mathbf{e}_{(1,2)}+5\mathbf{e}_{(3)}\nonumber\\
                                                       &=\frac{5}{3}\mathbf{p}_{(1,1,1)}-\mathbf{p}_{(1,2)}+\frac{2}{15}\mathbf{p}_{(3)}\nonumber\\
                                                       &=\frac{4}{5}\mathbf{s}_{(1,1,1)}-\frac{7}{5}\mathbf{s}_{(1,2)}+\frac{14}{5}\mathbf{s}_{(3)}.\nonumber
\end{align}

We conclude this paper by stating a second formula expressing $\Sigma_k$ as a weighted sum over the whole $NC_{k+1}$. To this aim, let us introduce the notion of an irreducible component of a noncrossing partition.

Given $\tau\in NC_n$, let $j_1$ be the greatest integer lying in the same block of $1$. Set $\tau_1=\tau_{\mid_{[j_1]}}$ so that $\tau_1$ is an irreducible noncrossing partition of $[j_1]$. Now, let $j_2$ be the greatest integer lying in the same block of $j_1+1$ and set $\tau_2=\tau_{\mid_{[j_1+1,j_2]}}$. By iterating this process, we determine the sequence of irreducible noncrossing partitions $\tau_1,\ldots,\tau_d$, which we name the \textit{irreducible components} of $\tau$, such that $\tau=\tau_1\cup\cdots\cup\tau_d$. For all $\tau\in NC_n$, we denote by $d(\tau)$ the number of its irreducible components. Note that, $d(\tau)=1$ if and only if $\tau$ is an irreducible noncrossing partition. The proof of the following theorem is omitted.
\begin{thm}
We have
\begin{equation}\label{second formula}\Sigma_k=\sum_{\tau\in NC_{k+1}}\left[(-1)^{d(\tau)-1}V_{\tau}\right]R_{\taup},\end{equation}
where
\[V_{\tau}=\frac{1}{k}\sum_{\scriptscriptstyle 0\leq i_1<\ldots<i_{d}\leq k-1}i_1^{u(\tau_1)}\cdots i_{d}^{u(\tau_{d})},\]
if $d=d(\tau).$
\end{thm}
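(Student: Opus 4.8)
The plan is to reduce Theorem~4.2 to Theorem~4.1 by passing from the order-theoretic bookkeeping on all of $NC_{k+1}$ to the bookkeeping on irreducible noncrossing partitions, exactly paralleling the proof of \eqref{first formula}. Concretely, I would start from the identity \eqref{id:Kerov2} in the form \eqref{id:Kerov3}, i.e.
\[\Sigma_k=-\frac{1}{k}\,[z^{k+1}]\prod_{j=0}^{k-1}\frac{1}{\mathcal{M}_{\lambda\free j}(z)},\]
and now expand each factor using \eqref{id:BvsM} in the \emph{moment} rather than the boolean form: $\bigl(\mathcal{M}_{\lambda\free j}(z)\bigr)^{-1}=1-\mathcal{B}_{\lambda\free j}(z)$, so that $\prod_{j}(\mathcal{M}_{\lambda\free j})^{-1}=\prod_j\bigl(1-\mathcal{B}_{\lambda\free j}(z)\bigr)$. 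Expanding this product over subsets of $\{0,\ldots,k-1\}$ and then, on each chosen subset, over the blocks contributed by the boolean cumulants, produces a sum indexed by an ordered tuple of nonempty irreducible noncrossing partitions together with a strictly increasing choice of shift-values. The point is that concatenating such a tuple of irreducible noncrossing partitions is precisely the operation $\tau=\tau_1\cup\cdots\cup\tau_d$ that recovers an arbitrary $\tau\in NC_{k+1}$ from its irreducible components, and the sign $(-1)^{d}$ from the $d$ chosen factors $(1-\mathcal{B})$ becomes the $(-1)^{d(\tau)-1}$ of the statement (one sign is absorbed by the overall $-\frac1k$).

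Carrying this out in steps: (1) write $\prod_{j=0}^{k-1}\bigl(1-\mathcal{B}_{\lambda\free j}(z)\bigr)=\sum_{d\geq 1}(-1)^{d}\sum_{0\le i_1<\cdots<i_d\le k-1}\mathcal{B}_{\lambda\free i_1}(z)\cdots\mathcal{B}_{\lambda\free i_d}(z)$ (the $d=0$ term contributes nothing to $[z^{k+1}]$ since $k+1\ge 2$); (2) use \eqref{id:BvsRj}, namely $B_n(i)=\sum_{\pi\in NC_n^{irr}}i^{u(\pi)}R_{\pip}(\lambda)$, to expand each $\mathcal{B}_{\lambda\free i_h}(z)=\sum_{n\ge 1}B_n(i_h)z^n$ as a sum over irreducible noncrossing partitions $\tau_h$ weighted by $i_h^{u(\tau_h)}R_{\taup_h}(\lambda)$; (3) extract $[z^{k+1}]$, which forces $\sum_h|\tau_h|=k+1$ where $|\tau_h|$ is the ground-set size of $\tau_h$, hence identifies the tuple $(\tau_1,\ldots,\tau_d)$ with the irreducible-component decomposition of a unique $\tau\in NC_{k+1}$ with $d(\tau)=d$; (4) collect: $R_{\taup_1}(\lambda)\cdots R_{\taup_d}(\lambda)=R_{\taup}(\lambda)$ and $\sum_{i_1<\cdots<i_d}i_1^{u(\tau_1)}\cdots i_d^{u(\tau_d)}$ is exactly $k\cdot V_\tau$; (5) multiply by $-\frac1k$ to land on $(-1)^{d(\tau)-1}V_\tau\,R_{\taup}$, summed over $\tau\in NC_{k+1}$.

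The one subtlety I would be careful about is the bijection in step (3): given $\tau\in NC_{k+1}$, its irreducible components $\tau_1,\ldots,\tau_d$ live on the \emph{consecutive} intervals $[j_1],[j_1+1,j_2],\ldots$, so after the obvious order-preserving relabellings they are honest elements of $NC_{n_1}^{irr},\ldots,NC_{n_d}^{irr}$ with $n_1+\cdots+n_d=k+1$; conversely any ordered tuple of such irreducibles reassembles to a unique $\tau$. Since $u(\tau_h)$, $\ell$, and $R_{\taup_h}$ are all invariant under these relabellings, the correspondence is weight-preserving, and the sum over tuples in the expansion matches the sum over $NC_{k+1}$ with no overcounting. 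I expect \textbf{this identification of the component decomposition with the expansion indices}, together with bookkeeping the shift-variables $i_1<\cdots<i_d$ correctly through the product, to be the only real content; everything else is the same power-series manipulation already used for Theorem~4.1. One should also note, as a consistency check, that when $\tau$ is itself irreducible ($d(\tau)=1$) the coefficient reduces to $\frac1k\sum_{0\le i\le k-1}i^{u(\tau)}$, which is the $d=1$ restriction of \eqref{first formula} after summing the inner $W_\tau(\pi)$ over $\pi$; this is exactly the kind of partial verification that confirms the sign and normalization are right.
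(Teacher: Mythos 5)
The paper omits its own proof of this theorem, so there is nothing to compare against; your main derivation, steps (1)--(5), is correct and is surely the intended argument. It reruns the proof of Theorem~4.1 starting from \eqref{id:Kerov3}, but stops the expansion of $\prod_{j=0}^{k-1}\bigl(1-\mathcal{B}_{\lambda\free j}(z)\bigr)$ at the level of subsets $\{i_1<\cdots<i_d\}$ of $\{0,\ldots,k-1\}$ instead of symmetrizing over $\frak{S}_k$; the identification of the resulting ordered tuples $(\tau_1,\ldots,\tau_d)$ of irreducibles (with $\sum_h|\tau_h|=k+1$ forced by extracting $[z^{k+1}]$) with the irreducible-component decomposition of a unique $\tau\in NC_{k+1}$ is exactly the right bijection, and the sign and normalization work out as $-\tfrac{1}{k}\,(-1)^{d}\,kV_{\tau}=(-1)^{d(\tau)-1}V_{\tau}$.

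One correction: the closing ``consistency check'' rests on a false premise. The two formulas do \emph{not} agree term by term on irreducible $\tau$: for $k=3$ and $\tau=\{\{1,4\},\{2\},\{3\}\}$ one has $V_{\tau}=\tfrac{1}{3}(0^2+1^2+2^2)=\tfrac{5}{3}$, while $\sum_{\pi:\tau\mirr\pi}(-1)^{\ell(\pi)-1}W_{\tau}(\pi)=\tfrac{2}{3}-\tfrac{2}{3}-\tfrac{2}{3}+\tfrac{5}{3}=1$, which is already the full coefficient of $R_2$ in $\Sigma_3=R_4+R_2$. Formula \eqref{second formula} spreads each coefficient over all of $NC_{k+1}$, reducible partitions included (here the five reducible $\tau\in NC_4$ of type $(2,1,1)$ contribute a net $-\tfrac{2}{3}$), and only the aggregate coefficients of the monomials $R_{\mup}$ coincide with those produced by \eqref{first formula}. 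This does not affect the validity of your proof; just replace that check with a direct verification such as $\Sigma_2=R_3$, where the contributions $R_3+\tfrac{1}{2}R_2-\tfrac{1}{2}R_2$ from $\{\{1,2,3\}\}$, $\{\{1,3\},\{2\}\}$ and $\{\{1,2\},\{3\}\}$ exhibit exactly this cross-cancellation between irreducible and reducible terms.
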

\end{document}